\newtheorem{theorem}{Theorem}[section]
\newtheorem{Theorem}{Theorem}[section]
\newtheorem{lemma}[theorem]{Lemma}
\newtheorem{Proposition}[Theorem]{Proposition}
\newtheorem{proposition}[theorem]{Proposition}
\newtheorem{remark}{Remark}[section]
\newtheorem{example}{Example}[section]
\def\sure{{\rm SURE}\xspace}
\def\th{\vartheta}
\begin{document}

\begin{frontmatter}
\title{No need for an oracle:  the nonparametric maximum likelihood decision in the compound decision problem is minimax}
\runtitle{CD Minimax }

\begin{aug}
\author[A]{\fnms{Ya\hspace{-0.1em}{'}\hspace{-0.1em}acov}~\snm{Ritov}\ead[label=e1]{yritov@umich.edu}}
\thankstext{t1}{Supported in part by NSF Grant DMS-2113364.  This paper follows the author's Blackwell's Lecture, JSM 2023.}
\address[A]{\jr\ is Professor, Department of Statistics, University of Michigan, Ann Arbor, Michigan USA\printead[presep={\ }]{e1}.}
\end{aug}

\begin{abstract}
  We discuss the asymptotics of the nonparametric maximum likelihood estimator (NPMLE) in the normal mixture model. We then prove the convergence rate of the NPMLE decision in the empirical Bayes problem with normal observations. We point to (and heavily use) the connection between the NPMLE decision and Stein unbiased risk estimator (\sure).
  Next, we prove that the same solution is optimal in the compound decision problem where the unobserved parameters are not assumed to be random.

  Similar results are usually claimed using an oracle-based argument. However, we contend that the standard oracle argument is not valid. It was only partially proved that it can be fixed, and the existing proofs of these partial results are tedious. Our approach, on the other hand,  is straightforward and short.
\end{abstract}

\begin{keyword}
\kwd{Empirical Bayes}
\kwd{Compound decision}
\kwd{Minimax}
\kwd{Oracle}
\kwd{Nonparametric maximum likelihood}
\end{keyword}

\end{frontmatter}

\section{Introduction}

Suppose, for some \emph{unobserved} $\th_1,\dots,\th_n$, the observations $Y_1,\dots,Y_n$ are independent given $\th_1,\dots,\th_n$ and   $Y_i\mid \th_{[n]} \dist N(\th_i,\sig^2)$.  The goal is to estimate $\th_{[n]}$ under the  $L_2$ norm,
 \eqsplit[L2loss]{
    \scl(\th_{[n]},\hat\th_{[n]})=\frac1n\summ i1n (\hat\th_i-\th_i)^2,
  }
where $[n]\eqdef 1,\dots,n$ and for a general  sequence, $a_{[n]}$ is a short notation for $a_1,\dots,a_n$.  In this note we restrict our attention to the dense case when $\lim n^{-1}\sum \th_i^2$ is bounded away (in probability) from  0.

A few models were considered in the literature to model $\th_{[n]}$.
In the classical empirical Bayes (EB) problem, cf. \cite{Robbins56, Zhang03, Efron19}, $\th_1,\dots,\th_n$ are \iid $G$ for some unknown $G$. More generally, they may be a time sequence following some state-space model or a hidden Markov model. The statistician's aim is to minimize $R(G,\hat \th_{[n]})\eqdef \E \scl(\th_{[n]},\hat\th_{[n]})$ without the knowledge of $G$. Note that the expectation is taken assuming that $\th_{1},\dots,\th_n$ are  random. In the compound decision (CD) problems, $\th_{1},\dots,\th_n$ are just fixed unknown parameters. The target now is to minimize the same risk function, but the expectation is taken only over $\hat\th_{[n]}$ since $\th_{[n]}$ are fixed. In this paper, we restrict our attention only to the EB/CD problem, i.e., $\th_1,\dots.\th_n$ are either \iid sample or unknown but fixed. To simplify the discussion, we assume that $|\th_i|<c$.

\cite{Efron19} titled his authoritative  \emph{Statistical Science} review by \emph{``Bayes, Oracle Bayes and Empirical Bayes.''} Oracle Bayes is what we refer to as the CD problem. Efron's title brings forward an oracle that essentially makes the CD setup as if it were an  EB with respect to the (unknown to the statistician) empirical distribution of the $\th$s. Our main contribution is to argue that this oracle is misleading and unnecessary. We claim that the existence of an asymptotic minimax solution can be proved by the standard method of considering the statistical problem as a zero sum game (with asymmetric information) between the statistician and Nature.

There are two complementary methods to prove that a result is minimax. In both methods, we compare the problem at hand to another problem whose solution is known and serves as a bound of what is achievable. With the oracle based argument, we compare the statistician to a more knowledgeable statistician, who faces an easier problem. In the game theoretic approach, we make Nature more atrocious; thus, the statistician faces a more difficult problem, and we can argue that, in general, he cannot do better.

We advocate for the nonparametric maximum likelihood estimator (NPMLE) as a good strategy for dealing with both models. It is based on the assumption that the observations come from a mixture of normal distributions \emph{as if}  $\th_{[n]}$  are \iid (as, indeed, they are in the EB model but not in the CD one). This approach is based on the connection between the NPMLE and Stein's unbiased risk estimator (\sure), as presented below. We then bring a new proof that the same solution is indeed valid in the CD context, where seemingly the NPMLE doesn't fit the model.

Thus the contribution of this short communication is fourfold: (i) We prove concentration inequality for the NPMLE procedure in the EB and CD problems, (ii) We relate the \sure to the NPMLE, (iii) we argue that the oracle-type argument suggested in the literature is not valid, and (iv) we prove that the NPMLE is minimax in both problems.

\section{Maximum likelihood estimator for the empirical Bayes problem}

Consider the EB problem described in the introduction. If $G=G_0$ was known, the optimal decision was the Bayes estimator $\hat\th_i=\del_{G_0}(Y_i)$, where $\del_G(y)\eqdef\E(\th\mid Y=y)$ under the assumption that $\th\dist G$ and $Y\mid \th\dist \Phi\bigl((\cdot-\th)/\sig\bigr)$, and $\Phi$ is the standard normal cdf. It is well known that $\del_G$  is given by the Tweedie formula, \cite{Robbins56,Efron11,Efron19}:
 \eqsplit[Tweedie]{
    \del_G(y) &= y + \sig^2\frac{f'_G(y)}{f_G(y)},
 }
{where}
\eqsplit{
    f_G(y)&=\frac1\sig\int \varphi\bigl(\frac{y-\th}{\sig}\bigr)dG(\th)
  }
is the marginal density of $Y$. Here $\varphi $ is the standard normal density. The importance of the Tweedie formula is that it expresses the Bayes decision as a function of the observable, $Y_{[n]}$, and not of the unknown distribution $G$.  

Estimating $G$ has a long history, cf.,  \cite{ riceRosenblatt83, ritov1983, carrollHall88,  stefanskiCaroll90, fan1991, donohoLaw92}. However, we are not interested in estimating $G$ per se. We may have interest in estimating the optimal procedure and the risk function, and this can be estimated directly.  In the EB context, where $G$ is not known, \cite{BrownGreenshtein09} proved that we can replace $f_G$ in the Tweedie formula by a kernel density estimate of the marginal distribution of $Y$; \cite{JiangZhang09}, \cite{SahaGunt20}, and
\cite{GreenshteinRitov22} advocated estimating $f_G$ by $f_{\hat G}$, where $\hat G$ is the nonparametric maximum likelihood (NPMLE) of $G$. 

We use the term NPMLE since the object maximizing the likelihood is a general distribution function and thus ``nonparametric.''  However, the class of possible distributions of $Y$ is dominated by the Lebesgue measure and bounded by from above by 
$
   \frac1\sig\varphi\bigl(\frac{y-c}{\sig}\bigr)\vee\frac1\sig\varphi\bigl(\frac{y-c}{\sig}\bigr)\vee\frac{1}c\ind\bigl(|y|<c\bigl)$ from above and 
   $\frac1\sig\varphi\bigl(\frac{y-c}{\sig}\bigr)\wedge\frac1\sig\varphi\bigl(\frac{y-c}{\sig}\bigr)$ from below.
In this sense, it is not a generalized MLE, but a simple maximization of the likelihood (see \eqref{mledisc} below).  Thus we do not need a specific device to define it like the definition suggested in the seminal paper of \cite{kieferWolfowitz1956}. Computationally, this is a mixture model with respect to one parameter exponential family  to which the EM algorithm can be applied. Moreover, we can apply the EM algorithm to maximization over a fixed grid. Personally, this is the algorithm I usually use, and typically, early stopping yields better results. A more efficient algorithm is  given by \cite{KoenkerMizera14}. The maximization is feasible since $\hat G$ has at most $n$ support points and we should maximize
 \eqsplit[mledisc]{
    \ell_n(\ti\th_{[n]},\ti g_{[n]}; Y_{[n]}) &= \summ i1n \log\summ j1n \ti g_j e^{-(Y_i-\ti\th_j)^2/2\sig^2}
  }
 over $\ti\th_{[n]}$ and $\ti g_{[n]}$ in $[-c,c]^n$ and the standard simplex, respectively. See Appendix \ref{app:MLE} for some of the details.

 In the following, in particular Theorem \ref{EBSURE}, I'll make precise the case for our use of the MLE.

We start with:

\begin{Proposition}
\label{prop:boundedDerivatives}
Let $\ell(G;y)=\log f_G(y)+y^2/2\sig^2$. Then $\del_G(y)-y=\sig^2\ell'(G;y)$. For any $G$ with bounded support, the functions $\ell(G;y)$ and $\del_G(y)-y$ have bounded derivatives (with respect to $y$) of any order.
\end{Proposition}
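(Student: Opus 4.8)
The plan is to derive both assertions from Tweedie's formula \eqref{Tweedie} after rewriting $\ell(G;y)$ as the cumulant generating function of an exponentially tilted version of $G$. The first assertion is essentially immediate: differentiating $\ell(G;y)=\log f_G(y)+y^2/2\sig^2$ in $y$ gives $\ell'(G;y)=f_G'(y)/f_G(y)+y/\sig^2$, and \eqref{Tweedie} gives $\sig^2 f_G'(y)/f_G(y)=\del_G(y)-y$, which turns this into $\sig^2\ell'(G;y)=\del_G(y)$ in one line (equivalently $\del_G(y)-y=\sig^2(\log f_G)'(y)$, i.e. the stated relation with $\log f_G$ in place of $\ell$). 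No difficulty here.

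For the boundedness I would first record the representation that drives everything. Writing the normal density explicitly and pulling $e^{-y^2/2\sig^2}$ out of the mixing integral,
\[
\ell(G;y)=\mathrm{const}+\log\int e^{y\th/\sig^2}\,dG_\sig(\th),\qquad dG_\sig(\th)\eqdef e^{-\th^2/2\sig^2}\,dG(\th),
\]
so that $\ell(G;y)=\mathrm{const}+K(y/\sig^2)$, where $K(t)=\log\int e^{t\th}\,dG_\sig(\th)$ is the cumulant generating function of the finite, strictly positive measure $G_\sig$. Because $G$ is supported in $[-c,c]$, so is $G_\sig$, hence $K$ is real-analytic on all of $\mathbb R$ with $\ell^{(k)}(G;y)=\sig^{-2k}K^{(k)}(y/\sig^2)$ for $k\ge1$. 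The quantity $K^{(k)}(t)$ is exactly the $k$-th cumulant of the tilted probability measure $\pi_t(d\th)\propto e^{t\th}\,dG_\sig(\th)$; at $t=y/\sig^2$ this tilt is the posterior $G(d\th\mid Y=y)\propto\varphi((y-\th)/\sig)\,dG(\th)$, so $K'(y/\sig^2)=\del_G(y)$ and $K''(y/\sig^2)=\mathrm{Var}(\th\mid Y=y)$, which reconfirms the identity and also shows $\del_G^{(k)}(y)=\sig^{-2k}K^{(k+1)}(y/\sig^2)$.

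With this in hand the boundedness is short: every $\pi_t$ is a probability measure on $[-c,c]$, so all of its moments are at most $c^k$ in absolute value, and since the $k$-th cumulant is a fixed polynomial in the first $k$ moments there is a constant $C_k=C_k(c)$ with $|K^{(k)}(t)|\le C_k$ for all $t$. Hence $\sup_y|\ell^{(k)}(G;y)|\le\sig^{-2k}C_k$, and likewise $\del_G(y)-y$ has bounded derivatives of every order (its first derivative is $\sig^{-2}K''(y/\sig^2)-1$ and the higher ones coincide with those of $\del_G$). The one point that genuinely needs care — and the reason I favor the cumulant generating function route over a direct Fa\`a di Bruno expansion of $\log f_G$ — is \emph{uniformity}: the bounds must hold simultaneously for all $y$ (all tilts $t$) and for all admissible $G$. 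This is exactly what the representation delivers, since the moment bound $|\int\th^j\,d\pi_t|\le c^j$ holds for every probability measure on $[-c,c]$ and the moment-to-cumulant polynomial is the same for all of them.
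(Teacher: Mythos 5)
Your proof is correct and follows essentially the same route as the paper's: both rewrite $\ell(G;y)$ as the cumulant generating function of an exponentially tilted, compactly supported (probability) measure and bound all of its cumulants uniformly in the tilt, the only difference being that the paper quotes the explicit bound $(2ck)^k$ for the $k$th cumulant of a distribution on $(-c,c)$ where you invoke the moment-to-cumulant polynomial together with $|\int\th^j\,d\pi_t|\le c^j$. Your side remark that the identity should read $\sig^2\ell'(G;y)=\del_G(y)$, equivalently $\del_G(y)-y=\sig^2(\log f_G)'(y)$, is also accurate; this is a harmless slip in the statement (and in the paper's displayed derivative, which drops the $y/\sig^2$ term) that does not affect the boundedness claims.
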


\begin{proof}
Clearly, for an arbitrary $y_0$:
   \eqsplit{
    \ell(G;y) &=   \log \int e^{(y-y_0)\th/\sig^2} e^{y_0\th/\sig^2-\th^2/2\sig^2} dG(\th).
    }
I.e., it is the log of the moment generating function of $d\ti G_{y_0}$ evaluated at $y-y_0$, where $$d\ti G_{y_0}\eqdef  A \exp({y_0\th/\sig^2-\th^2/2\sig^2}) dG(\th)$$ for some normalizing constant $A=A(G,y_0)$. It follows that the $k$th derivative of $\ell(G;y)$ with respect to $y$ at $y_0$ is the $k$ cumulants of $d\ti G_{y_0}$---a distribution with compact support. The $k$th  cumulant of any distributions on $(-c,c)$ is bounded by $(2ck)^k$ (see \cite{DubkovMalakhov76}).

Since
 \eqsplit{
    \frac{\partial}{\partial y}\ell(G;y) &= \frac{f_G'(y)}{f_G(y)}=\del_G(y)-y,
  }
the assertions about $\del_G$ follow.
\end{proof}

In particular:
 \eqsplit{
   |\del_G(y)| &= |\E_G(\th\mid y)| \le |c|;
    \\
   \del_G'(y) &= \var_G(\th\mid y) \in [0, c^2].
  }
Note that $\del_G$ is strictly monotone (unless $G$ is a single point mass and $\del_G$ is constant), which follows since the model has the monotone likelihood property.

If $Y\mid \th$ is a mixture of an exponential family with mixing distribution whose support is everywhere dense, then any mean 0 function of $Y$ is in the tangent space, \cite{bkrw}, Theorem 4.5.1 page 130. It follows that the mean of every smooth function $h(Y)$ is efficiently estimated by its sample mean. It is also efficiently estimated by the NPMLE, and since all semiparametric efficient estimators are equivalent on the $n^{-1/2}$ scale, we obtain:
 \eqsplit[NPMLEmean]{
    \int h(y)d\bbf_n(y) &= \int h(y)dF_{\hat G}(y)+o_p(n^{-1/2}).
  }
The heuristic is that  any function $h(Y)$ in the tangent set can be written as $h(Y)=\E\eta(\th\mid Y)$. If $\eta$ was bounded then  consider the family $dG_t(\th)=\bigl(1+t(\eta(\eta)-E_{\hat G}\eta)\bigr)d\hat G(\th)$, and the log-likelihood as a function of $t$. Since the NPMLE corresponds to $t=0$, the derivative of the log-likelihood at $0$ should be 0, or
 \eqsplit{
    0 &= \frac1n \summ i1n \E_{\hat G}\bigl(\eta(\th)\mid Y_i\bigr) -  E_{\hat G}\eta(\th)
    \\
    &= \int h(y) d\bbf_n(y) - \int h(y) d F_{\hat G}.
  }
The argument can be made precise by considering bounded approximations, see \cite{bkrw}.

The Stein unbiased risk estimator (\sure) follows the following expansion. Suppose $Y\dist N(\th,\sig^2)$, then
 \eqsplit{
    \E\bigl(&\del(Y)-\th)^2
    \\
    &= \E\bigl(\del(Y)-Y\bigr)^2+2\E\Bigl(\bigl(\del(Y)-Y\bigr)(Y-\th)\Bigr)\\
    &\hspace{3em}+\E(Y-\th)^2
    \\
    &=   \E\bigl(\del(Y)-Y\bigr)^2 +2\sig^2 \E\bigl(\del'(Y)-1\bigr)  +\sig^2,
  }
 where the second term on the RHS follows an integration by parts:
  \eqsplit{
    &\hspace{-3em}\int (\del(y)-y)(y-\th)\frac1{\sig}\varphi\bigl(\frac{y-\th}{\sig}\bigr)dy
    \\&= \sig^2 \int (\del'(y)-1)\frac1{\sig}\varphi\bigl(\frac{y-\th}{\sig}\bigr)dy .
   }
Therefore, $n^{-1}\summ i1n \Bigl(\bigl(\del(Y_i)-Y_i\bigr)^2+2\sig^2\bigl(\del'(Y_i)-1\bigr)\Bigr)+\sig^2$ is an unbiased estimator of $R(\th_{[n]},\hat\th_{[n]})$, where $\hat\th_i=\del(Y_i)$. The Stein unbiased risk estimator is given by $\sure(\del,\bbf_n)$, where
 \eqsplit{
    &\hspace{-1em}\sure(\del;F)
    \\
    &= \int (\del(y)-y)^2 dF(y) +  2 \sig^2\int(\del'(y)-1)dF(y)+\sig^2.
  }
In particular $R(G,\del)\equiv \sure(\del,F_G)$, and we are able to express $R(G,\del)$ as a function of the \emph{observed} marginal distribution of $Y$.

    Specializing the definition to the Bayesian estimator $\del_G$, we obtain:
 \eqsplit{
    &\hspace{-2em}\sure(\del_G;\bbf_n)
    \\
    &= \sig^4 \int \Bigl(2\frac{f_G''(y)}{f_G(y)} - \Bigl(\frac{f_G'(y)}{f_G(y)}\Bigr)^2\Bigr) d\bbf_n(y)+\sig^2.
  }

Now, the functions   whose mean are computed in the $\sure(\del_G;\bbf_n)$ are all with uniformly bounded derivatives and with $L_2$ square integrable envelope, since
 \eqsplit{
    0&\le \frac{f_G''(y)}{f_G(y)}\le c^2
    \\
    0&\le \Bigl(\frac{f_G'(y)}{f_G(y)}\Bigr)^2\le 2 y^2+2c^2.
  }
Thus the empirical process  
 \eqsplit{
    \bbe_n(G)=\sqrt n\bigl(\sure(\del_G,\bbf_n)-\sure(\del_G,G_0) \bigr)
  }
converges under $G_0$ to  a Gaussian process, cf., \cite{Stute83} Theorem 1.1: his Conditions (i)--(iii) are satisfied by Proposition \ref{prop:boundedDerivatives}, and since the distribution of $Y$ has sub-Gaussian tails, one can take  $h(t)=\bigl(t(1-t)\bigr)^{1/4}$ as the weight function in his result. In particular,
  \eqsplit[sureEmp]{
    \bbe_n(\hat G)-\bbe_n(G_0)
    &= \OP({J(\|\del_{\hat G}-\del_{G_0}\|_2)}),
   }
where $J(x)=x\log(x)$. Cf. \cite{pollard1984}, ch. VII.

Now, a standard decomposition of the mean square:
 \eqsplit{
    &R ( G_0,\del_{\hat G})
    \\
    &= \E_{G_0} \Bigl(  \E_{G_0}\bigl(\th-\del_{\hat G}(Y)\big)^2\mid Y\bigr)  \Bigr)
    \\
    &=  \E_{G_0} \Bigl( \bigl(\del_{\hat G}(Y)-\del_{G_0}(Y)\bigr)^2 +\var(\th\mid Y)\Bigr)
    \\
    &= R(G_0,\del_{G_0})+  \E_{G_0} \Bigl( \bigl(\del_{\hat G}(Y)-\del_{G_0}(Y)\bigr)^2 \Bigr).
  }
Expressing in terms of \sure:
 \eqsplit[diff2]{
     &\hspace{-3em}\sure(\del_{\hat G},F_{G_0})- \sure(\del_{G_0},F_{G_0}) 
     \\
     &= \E_{G_0} \Bigl( \bigl(\del_{\hat G}(Y)-\del_{G_0}(Y)\bigr)^2 \Bigr)
  }
But by \eqref{sureEmp} and \eqref{NPMLEmean}:  
 \eqsplit[diff1]{
    0&\le \sure(\del_{\hat G},F_{G_0})- \sure(\del_{G_0},F_{G_0})
     \\
     &= \sure(\del_{\hat G},\bbf_n)- \sure(\del_{G_0},\bbf_n)
     \\
     &\hspace{3ex}+ \OP({\frac{J(\|\del_{\hat G}-\del_{G_0}\|_2)}{n^{-1/2}}})
     \\
     &= \sure(\del_{\hat G},\hat G)- \sure(\del_{G_0},\hat G)
     \\
     &\hspace{3ex}+ \OP({\frac{J(\|\del_{\hat G}-\del_{G_0}\|_2)}{n^{-1/2}}})
     \\
     &\le \OP({\frac{J(\|\del_{\hat G}-\del_{G_0}\|_2)}{n^{-1/2}}}),
  }
since $\sure(\del_{\hat G},\hat G)- \sure(\del_{G_0},\hat G)\le 0$. Comparing \eqref{diff1} to \eqref{diff2} we obtain that
  \eqsplit{
    &\sure(\del_{\hat G},F_{G_0})- \sure(\del_{G_0},F_{G_0}) = \OP({{\frac{\log n}{n}}})
   }

In another form:
\begin{theorem}\label{EBSURE}
 \eqsplit{
    R(G_0,\del_{\hat G}) &= R(G_0,\del_{G_0})+ \OP({{\frac{\log n}{n}}}).
  }

\end{theorem}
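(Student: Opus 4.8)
The plan is to read the statement as an immediate consequence of the exact risk decomposition combined with the bound on the excess \sure\ assembled in \eqref{diff2}--\eqref{diff1}. Write $\Delta\eqdef\|\del_{\hat G}-\del_{G_0}\|_2$ for the $L_2(F_{G_0})$ distance between the two Bayes rules. Conditioning on $Y$ and using that $\del_{G_0}$ is the posterior mean, the squared error splits orthogonally, giving
\[
 R(G_0,\del_{\hat G}) = R(G_0,\del_{G_0}) + \E_{G_0}\bigl(\del_{\hat G}(Y)-\del_{G_0}(Y)\bigr)^2 = R(G_0,\del_{G_0}) + \Delta^2 .
\]
Thus the entire theorem reduces to the single estimate $\Delta^2=\OP(\log n/n)$, and the remaining work is to squeeze $\Delta^2$ between $0$ and a quantity that again involves $\Delta$.

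First I would record the lower side: $\Delta^2\ge 0$ trivially, and by \eqref{diff2}, since $\sure(\del,F_{G_0})=R(G_0,\del)$ for \emph{every} $\del$, we have the exact identity $\Delta^2=\sure(\del_{\hat G},F_{G_0})-\sure(\del_{G_0},F_{G_0})$. For the upper side I would run the chain in \eqref{diff1} in three transfer steps. (i) Pass from the population \sure\ at $F_{G_0}$ to the empirical \sure\ at $\bbf_n$ using the empirical-process modulus \eqref{sureEmp}, at cost $\OP(n^{-1/2}J(\Delta))$; this is licensed because the integrands defining $\sure(\del_G;\cdot)$ have uniformly bounded derivatives and square-integrable envelopes by Proposition \ref{prop:boundedDerivatives}, so Stute's equicontinuity applies with the weight $h(t)=(t(1-t))^{1/4}$. (ii) Pass from $\bbf_n$ to the fitted marginal $F_{\hat G}$ using the NPMLE score identity \eqref{NPMLEmean} applied to those same smooth integrands, at the negligible cost $o_p(n^{-1/2})$. (iii) Kill the leading term: because $\del_{\hat G}$ is the Bayes rule for $\hat G$ and $\sure(\cdot,F_{\hat G})=R(\hat G,\cdot)$, the rule $\del_{\hat G}$ minimizes $\sure(\cdot,F_{\hat G})$, so $\sure(\del_{\hat G},F_{\hat G})-\sure(\del_{G_0},F_{\hat G})\le 0$. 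Chaining (i)--(iii) yields $0\le\Delta^2\le\OP\bigl(n^{-1/2}J(\Delta)\bigr)$.

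The last step is the self-bounding (fixed-point) argument, since $\Delta$ sits on both sides. With $J$ the Dudley entropy modulus of the class $\{\del_G\}$, $J(x)\asymp x\sqrt{\log(1/x)}$, the relation $\Delta^2\lesssim n^{-1/2}\Delta\sqrt{\log(1/\Delta)}$ gives $\Delta\lesssim n^{-1/2}\sqrt{\log(1/\Delta)}$, whose solution is $\Delta=\OP(\sqrt{\log n/n})$ and hence $\Delta^2=\OP(\log n/n)$; substituting into the decomposition above proves the theorem. I expect the main obstacle to be transfer step (i): one must control the modulus of $\bbe_n$ uniformly over the \emph{data-dependent} $\hat G$ rather than a fixed $G$, which is exactly why the uniform smoothness of Proposition \ref{prop:boundedDerivatives} and the sub-Gaussian tails are needed, and why the precise power of $\log n$ in the final rate is dictated by the entropy of $\{\del_G\}$ through the form of $J$. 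The self-referential closing inequality is the only genuinely nontrivial maneuver once the three transfers are in hand.
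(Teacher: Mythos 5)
Your proposal follows essentially the same route as the paper: the orthogonal risk decomposition $R(G_0,\del_{\hat G})=R(G_0,\del_{G_0})+\E_{G_0}(\del_{\hat G}-\del_{G_0})^2$, the identity $R(G_0,\del)=\sure(\del,F_{G_0})$, the three-step transfer through $\bbf_n$ and $F_{\hat G}$ via Stute's equicontinuity and the NPMLE score equation, the sign of $\sure(\del_{\hat G},F_{\hat G})-\sure(\del_{G_0},F_{\hat G})$, and the closing self-bounding inequality $\Delta^2\lesssim n^{-1/2}J(\Delta)$. The only difference is cosmetic: you take $J(x)\asymp x\sqrt{\log(1/x)}$ where the paper writes $J(x)=x\log(x)$, and your version is in fact the one consistent with the stated $\log n/n$ rate.
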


Theorem \ref{EBSURE} established the asymptotic optimality of using $\del_{\hat G}$. The statistician can achieve without knowing $G$ almost as he could if $G$ was known.

\begin{remark}
The \sure is defined only with respect to the marginal distribution of $Y$, although it implicitly refers to a loss function for some $\th_1,\dots,\th_n$. It is an unbiased estimator of the risk conditioned on $\th_{[n]}$. It obeys uniform LLN and CLT when the set of decision procedures is smooth, as is the case of all Bayes procedures.
\end{remark}

\begin{remark}
Stronger results concerning the NPMLE and its associated risk are given in \cite{JiangZhang09} and \cite{SahaGunt20}. However, our proof is much simpler, points to the connection between the \sure and the Bayesian risk, and serves better our fundamental aim of proving the optimality of the solution to the CD and not to the EB and its proxies.
\end{remark}

\section{The Maximum likelihood estimator and the compound decision problem}

The optimality for the EB problem was proved by considering an \emph{oracle} who can do anything the statistician can and knows everything the statistician knows, but unlike the statistician, also knows\footnote{This isn't the Oracle from Delphi, who was quite limited and obscure human being.}  $G$. Thus, the oracle should use the Bayes procedure $\ti\th_i=\del_G(Y_i)$, and the argument is that the estimator that the statistician is going to use, $\hat\th_i=\del_{\hat G}(Y_i)$ has similar performance as is stated in  Theorem \ref{EBSURE}.

When we consider the CD problem, this oracle is not useful since he assumes that $\th_{[n]}$ are \iid $G$, and they are not. It was suggested in the literature, e.g., \cite{JiangZhang09,Efron19} to consider a similar oracle who knows $\th_{[n]}$ up to permutation, effectively their empirical distribution function $\bbg_n$, and use $\del_{\bbg_n}$. Then it is argued that $\del_{\bbg_n}$ is comparable to $\del_{\hat G}$ (or any other estimator based on the Tweedie formula \eqref{Tweedie}).

 We find this approach to be problematic. An oracle that knows $\bbg_n$ will not use $\del_{\hat G}$. Consider the extreme case of $n=2$, and the oracle knows the values $\th_1<\th_2$, and observe, wlog, $Y_1<Y_2$. In that case, he would consider the pairing $(\th_1,Y_1), (\th_2,Y_2)$ as more likely than the other pairing $(\th_1,Y_2), (\th_2,Y_1)$. Certainly, if $Y_2-Y_2\gg \sig$. \cite{GreenshteinRitov09, GreenshteinRitov19} argue that the minimax decision for this particular oracle is the permutation invariant estimator
 \eqsplit{
    &\hspace{-1em}\del^*_{\bbg_n}(Y_i; Y_{[n]})
    \\
    &= \frac{\summ j1n \th_j \varphi\bigl(\frac{Y_i-\th_j}{\sig}\bigr)\sum_{\pi\in\EuScript{P}_n(i,j)} \prod_{k\ne i} \varphi\bigl(\frac{Y_k-\th_{\pi(k)}}{\sig}\bigr) } {\summ j1n \varphi\bigl(\frac{Y_i-\th_j}{\sig}\bigr)\sum_{\pi\in\EuScript{P}_n(i,j)} \prod_{k\ne i} \varphi\bigl(\frac{Y_k-\th_{\pi(k)}}{\sig}\bigr) },
  }
where $ \EuScript{P}_n(i,j)$ is the set of all permutations of $1,\dots,n$ such that $\pi(i)=j$ for every $\pi\in \EuScript{P}_n(i,j)$. This estimator uses all of $Y_{[n]}$ to estimate $\th_i$.

Thus, the oracle should be somehow prevented from using the optimal (for him) procedure. One approach was to enforce the oracle to use a `simple' or `separable'  estimator such that  $\hat\th_i$ depends on the observations only through $Y_i$. Indeed, an oracle thus restricted should use $\del_{\bbg_n}$. Let us call him the Disabled Oracle. However, we cannot compare him to the statistician: He, the oracle, knows better but is more restricted than the human statistician---the estimator $\del_{\hat G}(Y_i)$ the statistician is using is not simple and uses all the observations (through $\hat G$) for the estimate $\hat\th_i$. The second oracle we considered was the Permutation Oracle. His estimator is efficient, but cannot be used by the human statistician who does not know a permutation of the parameters. The Disabled Oracle, can be copied by the statistician but he does not define a bound, while the Permutation Oracle defined a bound which cannot be achieved. In the next section we will give another argument for the efficiency of what can be achieved which bypasses the need for oracles.

\begin{figure}
  \centering
  \includegraphics[width=0.48\textwidth]{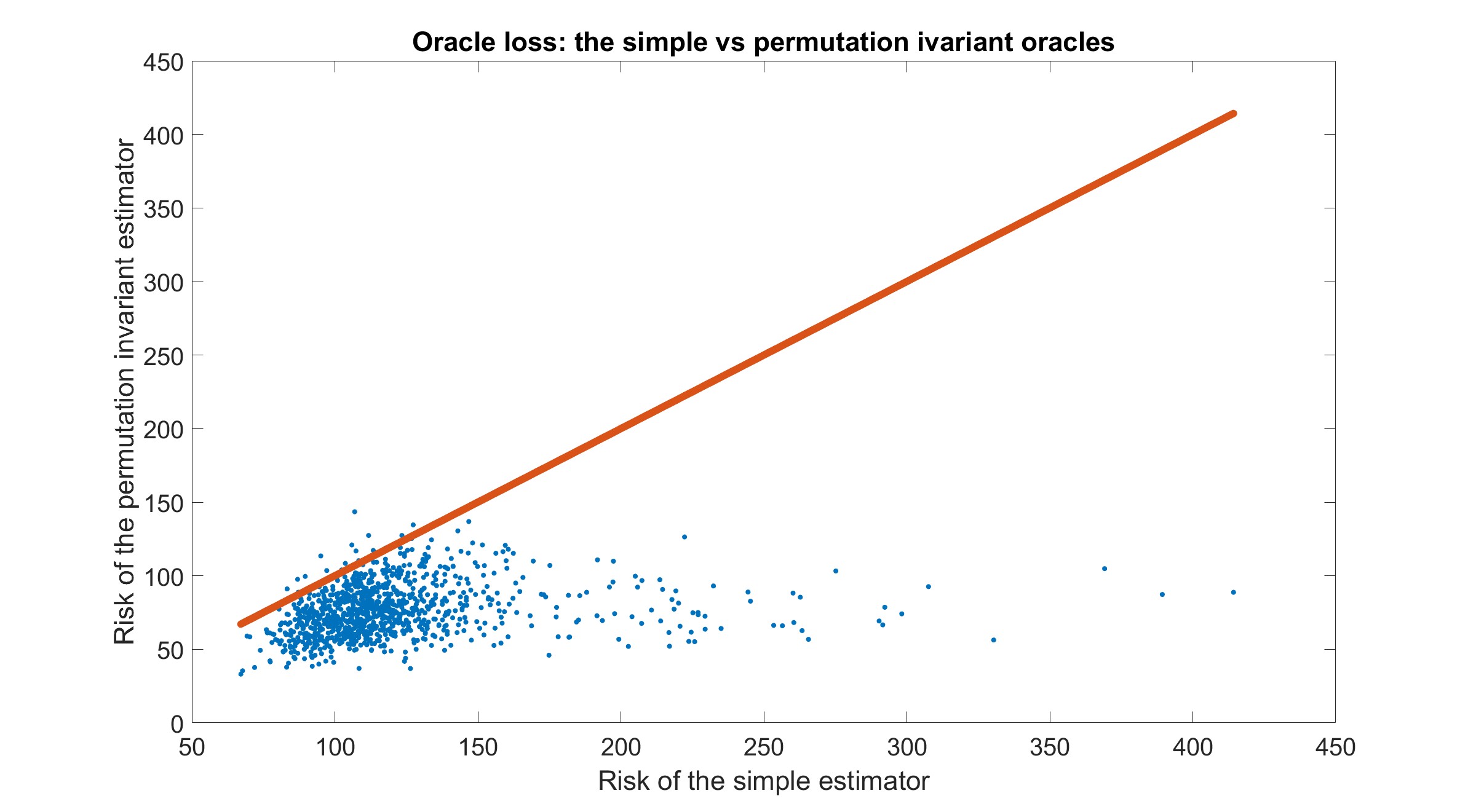}

  (a)
  \includegraphics[width=0.48\textwidth]{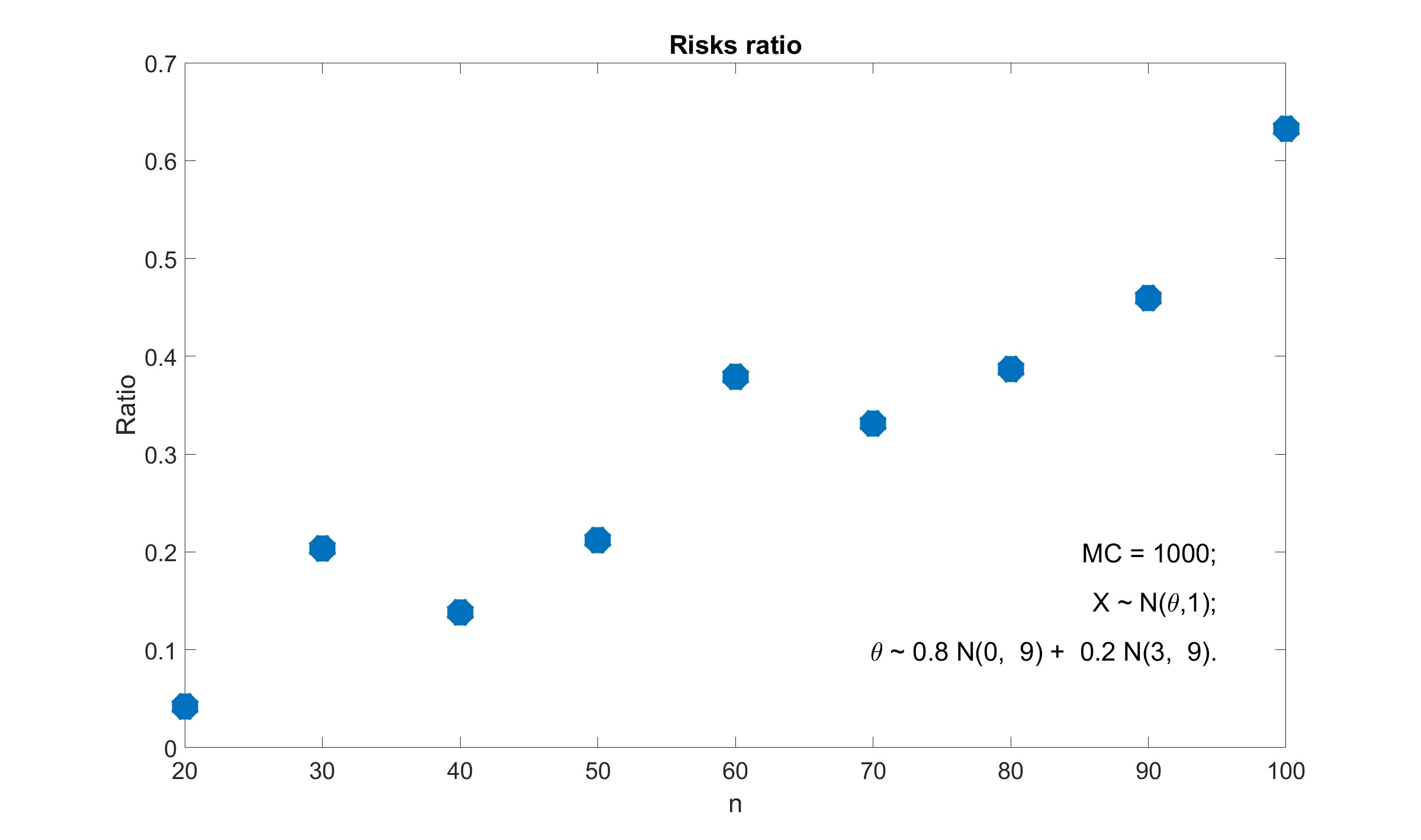}

  (b) 
  \caption{Comparing the risks of the estimators. The permutation invariant estimator is based on 1000000 random permutations. The $Y\dist N(\th,1)$, while $\th$ is with probability 0.8 a $N(0,9)$ rv and with probability 0.2, $N(3,9)$. (a) The loss function of the permutation invariant estimators on 1000 samples vs. the loss of the simple estimators applied to the same samples. The sample size was $n=100$; (b) The efficiency of the permutation invariant estimator relative to that of the simple estimator as a function of sample size $n$. Each point is based on 1000 Monte Carlo trials.}\label{anOracle}
\end{figure}

The difference between $\del_{\bbg_n}$ and $\del^*_{\bbg_n}$ is considerable even with moderate to large sample. The optimal oracle procedure is uncomputable\footnote{The oracle does not really care that the permutation invariant estimator cannot be computed in a reasonable time. However, we are naive human beings and do care about such trivialities. Thus humanoids would not exist long enough to complete the exact computation even for a moderate $n$. }, but we can approximate it by a biased sample of random permutations, where permutations are weighted independently of $Y_{[n]}$, according to their likelihood under $\th_{[n]}$. In Figure \ref{anOracle}, we compare the simple estimator, $\del_{\bbg_n}$, to the biased sample approximation, and the approximate permutation invariant estimator is strictly better up to $n=100$. Similar simulations were presented in \cite{GreenshteinRitov19}. It is true that, asymptotically, the two estimators seem to be equivalent. It was proved in \cite{GreenshteinRitov09} that under specific conditions $R(\bbg_n,\del_{\hat G})=R(\bbg_n,\del^*)+\op({n^{-1}})$.  But the argument is tedious, and the conditions are strong, while it is unclear whether this oracle makes sense.

\begin{example}
  Consider $\th_{i}=\tau_i+\xi_i$ where $\xi_i\dist \scn(0,1)$ and $\tau_i/c_n$ is uniform on $1,\dots,k_n$ for some  $c_n\to\en$ and $n/k_n\to m$. Thus, the oracle is faced by $k_n$ separated clusters, each of approximately size $m$. All the random variables are independent. But, for any $m$, the permutation invariant estimator is strictly better than the simple estimator. We conclude that the argument based on the oracle fails to prove the efficiency of the EB estimator in this CD problem.
\end{example}

To summarize this section, the standard argument of the CD oracle fails, because either the oracle can use a procedure that the statistician cannot mimic or unlike the statistician she is restricted to a simple procedure. We need another argument.

\section{Asymptotic efficiency of the NPMLE for the CD problem}
Our approach is different. There are other approaches to prove the optimality of a procedure. If the oracle method compared the statistician to a better decision maker, the classical approach was to give Nature more freedom and consider a zero-sum game in which the minimizer, the statistician, chooses a procedure and the maximizer,  Nature chooses the parameter maliciously. The optimal strategy for the statistician in this game cannot be improved in general (i.e., over the all parameter space avaiable to Nature).

We consider a game between the Statistician and Nature. If we let Nature choose \emph{any} $\th_{[n]}$, He would select the global minimax solution, cf. \cite{Bickel81}, which would not be relevant to our real statistical realm in which there are some arbitrary $\th_{[n]}$ and not the worst possible. We want to \emph{adapt} to these arbitrary points. To consider adaptive estimator while keeping the minimax notion, we commonly restrict Nature to be in some neighborhood. Thus, we have the local minimax in the sense of H\'ajek  and Le Cam, or more generally, the adaptive estimation in the semiparametric models. We follow these ideas.

\textbf{The game: }For a giving $G$ let $\scg$ be the set of all random (or not) probability distribution functions, such that if $\{\bbg_n\}\in\scg$ then $\E \bbg_n = G$ and   $\bbg_n(t)=\frac1n \summ i1n \ind(\th\le\th_i)$ for some $\th_1,\dots,\th_n$. The statistician observes $Y_{[n]}$ which are independent given $\th_{[n]}$ and $Y_i\dist \scn(\th_i,\sig^2)$. He, the statistician, who neither knows $G$ nor $\bbg_n$, has to choose $\hat\th_{[n]}=\Del(Y_{[n]})$, $\Del:\R^n\to\R^n$, with payoff given by \eqref{L2loss}. The value of the game is $R(\bbg_n,\Del)=E\scl(\th_{[n]},\hat\th_{[n]})$.

Note that $\bbg_n$ in the definition of $R(\cdot,\cdot)$ is a random element. Thus the expectation is also over the (possibly) random $\th_{[n]}$.

\begin{theorem}
  Asymptotically, $\del_{\hat G}$ is a minimax strategy, and $\bbg_n$, an empirical distribution function of a random sample from $G$, is a maxmin strategy for Nature. The conclusion of Theorem \ref{EBSURE} is valid for the game.
\end{theorem}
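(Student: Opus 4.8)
The plan is to read the displayed statement as the assertion that the zero-sum game just defined has asymptotic value $R(G,\del_G)$, attained up to the $O_P(n^{-1}\log n)$ term of Theorem \ref{EBSURE} by the saddle pair $(\del_{\hat G},\bbg_n^{\ast})$, where $\bbg_n^{\ast}$ is the empirical distribution of an \iid\ sample from $G$. Write $V_-=\sup_{\bbg_n\in\scg}\inf_{\Del}R(\bbg_n,\Del)$ and $V_+=\inf_{\Del}\sup_{\bbg_n\in\scg}R(\bbg_n,\Del)$; weak duality gives $V_-\le V_+$ for free, so it suffices to squeeze both between $R(G,\del_G)$ and $R(G,\del_G)+O_P(n^{-1}\log n)$. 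Two structural facts drive everything: the identity $R(g,\del)\equiv\sure(\del,F_g)$ already recorded above, and the concavity of the separable Bayes-risk functional $B(g):=\min_{\del}\sure(\del,F_g)=\sure(\del_g,F_g)$, which holds because each map $g\mapsto\sure(\del,F_g)$ is affine in $g$ (the marginal $F_g$ is linear in $g$ and $\sure(\del,\cdot)$ is linear in the distribution) and an infimum of affine functionals is concave.

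For the lower bound I let Nature play $\bbg_n^{\ast}$, which lies in $\scg$ since $\E\bbg_n^{\ast}=G$. Against it the statistician faces the genuine EB problem with $\th_1,\dots,\th_n$ an \iid\ sample from $G$; for the additive squared-error loss the Bayes rule is the separable posterior mean $\hat\th_i=\E(\th_i\mid Y_{[n]})=\E(\th_i\mid Y_i)=\del_G(Y_i)$, of value $R(G,\del_G)$. Hence $\inf_{\Del}R(\bbg_n^{\ast},\Del)=R(G,\del_G)$ and $V_-\ge R(G,\del_G)$.

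For the upper bound I let the statistician play $\del_{\hat G}$ and fix an arbitrary $\bbg_n\in\scg$. Condition on $\th_{[n]}$: then $\bbg_n$ is the realized empirical prior, the $Y_i$ are independent with $Y_i\dist N(\th_i,\sig^2)$, and $F_{\bbg_n}=\E(\bbf_n\mid\th_{[n]})$. By the Remark following Theorem \ref{EBSURE}, the \sure is conditionally unbiased for the risk and, the Bayes rules forming a smooth (Donsker) class, obeys a uniform conditional law of large numbers and central limit theorem; the proof of Theorem \ref{EBSURE} therefore transfers with $G_0$ replaced by $\bbg_n$. Concretely, the excess-\sure identity \eqref{diff2}, the empirical-process bound \eqref{sureEmp}, the first-order identity \eqref{NPMLEmean}, and the Bayes optimality $\sure(\del_{\hat G},F_{\hat G})\le\sure(\del_{\bbg_n},F_{\hat G})$ chain exactly as in \eqref{diff1}, and the self-bounding step converts the modulus $J(\|\del_{\hat G}-\del_{\bbg_n}\|_2)/\sqrt n$ into an $O_P(n^{-1}\log n)$ excess, giving $R(\bbg_n,\del_{\hat G})\le B(\bbg_n)+O_P(n^{-1}\log n)$ conditionally. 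Averaging over $\th_{[n]}$ and applying Jensen's inequality to the concave $B$ yields $\E B(\bbg_n)\le B(\E\bbg_n)=B(G)=R(G,\del_G)$, so $\sup_{\bbg_n\in\scg}R(\bbg_n,\del_{\hat G})\le R(G,\del_G)+O_P(n^{-1}\log n)$ and $V_+\le R(G,\del_G)+O_P(n^{-1}\log n)$. The sandwich $R(G,\del_G)\le V_-\le V_+\le R(G,\del_G)+O_P(n^{-1}\log n)$ then proves all three claims at once.

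The whole difficulty sits in the upper bound, in transporting the machinery of Theorem \ref{EBSURE} to the conditional, fixed-$\th_{[n]}$ world. The NPMLE stationarity identity \eqref{NPMLEmean} and the Bayes optimality inequality are distribution-free first-order conditions and need no change, and the concavity of $B$ is elementary; the two genuine points are the empirical process and the in-sample evaluation. After conditioning the $Y_i$ are independent but no longer identically distributed, so \eqref{sureEmp} must be read as a maximal inequality for a triangular array; this survives because, by Proposition \ref{prop:boundedDerivatives}, the class of \sure integrands indexed by $G$ has uniformly bounded derivatives and a fixed square-integrable envelope, hence bounded uniform entropy, so Stute's conditions hold uniformly over the conditional laws. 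More delicate is that in the CD problem $\del_{\hat G}$ is evaluated \emph{in sample}, on the same $Y_{[n]}$ that produced $\hat G$, so the decomposition \eqref{diff2}, which leaned on a fresh independent $Y$ and on $\del_{G_0}=\E(\th\mid Y)$, is not automatic; it is recovered from the conditional unbiasedness of the \sure and the uniform central limit theorem over the smooth class of Bayes rules, which permit substituting the data-dependent $\del_{\hat G}$ at the cost of only the same $O_P(n^{-1}\log n)$. This is precisely where concavity plus Jensen replace, and dispense with, the problematic oracle.
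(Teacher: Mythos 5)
Your lower bound matches the paper's own argument: Nature plays the \iid\ empirical distribution, against which the best response is the separable Bayes rule $\del_G$, and Theorem \ref{EBSURE} says $\del_{\hat G}$ nearly attains it. The divergence, and the gap, is in your upper bound. The paper's proof is ``immediate'' there because it invokes linearity: for a given procedure the reward is the expectation of a fixed function of $\th_{[n]}$, hence affine in the law of $\th_{[n]}$, and since every member of $\scg$ has $\E\bbg_n=G$ the value is the same for all of them and equals the EB value; the sup over $\scg$ therefore collapses to the \iid\ case and Theorem \ref{EBSURE} closes the saddle point. You instead condition on $\th_{[n]}$ and claim that the proof of Theorem \ref{EBSURE} ``transfers'' to give the fixed-parameter compound regret bound $R(\bbg_n,\del_{\hat G})\le\sure(\del_{\bbg_n},F_{\bbg_n})+\OP({n^{-1}\log n})$ for every realized $\th_{[n]}$, and only then average with Jensen. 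That conditional bound is exactly the hard compound-decision statement --- the comparison with what the paper calls the Disabled Oracle --- which the paper explicitly describes as requiring the tedious arguments and strong conditions of \cite{JiangZhang09} and \cite{GreenshteinRitov09}, and which the game-theoretic reformulation exists precisely to bypass. It does not follow by relabelling $G_0$ as $\bbg_n$: the decomposition \eqref{diff2} rests on $\del_{G_0}(Y)=\E(\th\mid Y)$ for a fresh $Y$ with $\th\dist G_0$, which has no meaning when $\th_{[n]}$ is fixed and $\del_{\hat G}$ is evaluated in sample, and \eqref{sureEmp} is an \iid\ empirical-process statement. You flag both obstacles yourself but resolve neither; asserting that conditional unbiasedness of the \sure\ and a uniform CLT ``permit substituting'' the data-dependent rule at cost $\OP({n^{-1}\log n})$ is the claim that needs proof, not a proof.

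The observation that $g\mapsto\min_{\del}\sure(\del,F_g)$ is concave (an infimum of affine functionals) and the ensuing Jensen step $\E B(\bbg_n)\le B(G)$ are correct and genuinely nice, but they operate only downstream of the missing conditional bound. To complete the argument along the paper's lines, drop the conditioning entirely: argue that for the fixed strategy $\Del_{\hat G}$ the map $\bbg_n\mapsto R(\bbg_n,\Del_{\hat G})$ is determined by the law of $\th_{[n]}$ through $\E\bbg_n=G$ alone, so the supremum over $\scg$ is already attained at the \iid\ member, where Theorem \ref{EBSURE} applies verbatim.
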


The proof of the theorem is immediate. If $\bbg_n$ is the edf of an \iid sample from $G$, then $\E\bbg_n=G$, and since for any given procedure of the statistician, the reward for Nature is linear in $\bbg_n$, the value for each $\bbg_n\in\scg$ is the same and is a function only of $G$. On the other hand, if $\bbg_n$ is \iid edf, then we are back in the EB setup, and hence $\del_{\hat G}$ is approximately optimal. Thus $(\bbg_n,\del_{\hat G})$ is an asymptotic saddle point.

We can express this result from the point of view of the statistician:
\begin{theorem}
  If $\Del_{\hat G}=\bigl(\del_{\hat G}(Y_1),\dots,\del_{\hat G}(Y_n)\bigr)$, then for any $\bbg_n\in\scg$:
   \eqsplit{
    R(\bbg_n,\Del_{\hat G})  &\le \min_{\Del}\max_{\ti\bbg_n\in\scg} R(\ti\bbg_n,\Del)+\OP({\sqrt{\frac{\log n}{n}}}).
    }

\end{theorem}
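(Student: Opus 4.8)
The plan is to squeeze $R(G,\del_G)$ between two bounds: a lower bound $\min_{\Del}\max_{\ti\bbg_n\in\scg}R(\ti\bbg_n,\Del)\ge R(G,\del_G)$ on the value of the game, and a uniform upper bound $R(\bbg_n,\Del_{\hat G})\le R(G,\del_G)+\OP({\sqrt{\log n/n}})$ valid for every $\bbg_n\in\scg$; the theorem then follows since $R(G,\del_G)\le\min_{\Del}\max_{\ti\bbg_n}R$. For the lower bound, Nature may adopt the strategy in which $\th_1,\dots,\th_n$ are an \iid sample from $G$: its edf $\bbg_n^{\mathrm{iid}}$ satisfies $\E\bbg_n^{\mathrm{iid}}=G$ and so lies in $\scg$. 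Hence $\max_{\ti\bbg_n\in\scg}R(\ti\bbg_n,\Del)\ge R(\bbg_n^{\mathrm{iid}},\Del)$ for every $\Del$, giving $\min_{\Del}\max_{\ti\bbg_n}R\ge\min_{\Del}R(\bbg_n^{\mathrm{iid}},\Del)$. Under the \iid law the pairs $(\th_i,Y_i)$ are independent, so $\E(\th_i\mid Y_{[n]})=\del_G(Y_i)$, the Bayes rule is the separable $\del_G$, and $\min_{\Del}R(\bbg_n^{\mathrm{iid}},\Del)=R(G,\del_G)$. Thus Nature cannot be held below $R(G,\del_G)$.

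All the work is in the upper bound. I would fix an arbitrary $\bbg_n\in\scg$, condition on $\th_{[n]}$ so that $Y_1,\dots,Y_n$ are independent with $Y_i\dist\scn(\th_i,\sig^2)$, and take the expectation over $\th_{[n]}$ only at the end; write $L_n(\del)=\frac1n\summ i1n(\del(Y_i)-\th_i)^2$. The first ingredient is that the Stein identity is pointwise in each $\th_i$, so conditionally on $\th_{[n]}$ the summands of $L_n(\del)-\sure(\del,\bbf_n)$ have mean zero and are independent across $i$. Since $\del_{\hat G}$ ranges over the class $\mathcal{D}$ of Bayes rules, which by Proposition \ref{prop:boundedDerivatives} has uniformly bounded derivatives of every order, a maximal inequality over $\mathcal{D}$ that exploits the sub-Gaussian tails of the $Y_i$ gives $\sup_{\del\in\mathcal{D}}|L_n(\del)-\sure(\del,\bbf_n)|=\OP({\sqrt{\log n/n}})$; this uses only independence, not identical distribution, and so, unlike the empirical-process step of Theorem \ref{EBSURE}, transfers intact from EB to CD. The second ingredient is linearity: for the \emph{fixed} rule $\del_G$ the conditional mean of $\sure(\del_G,\bbf_n)$ is $\sure(\del_G,F_{\bbg_n})=R(\th_{[n]},\del_G)$, about which $\sure(\del_G,\bbf_n)$ concentrates at rate $\OP({\sqrt{\log n/n}})$ by the same bound, and $\E_{\th_{[n]}}\sure(\del_G,F_{\bbg_n})=\sure(\del_G,F_G)=R(G,\del_G)$ because \sure{} is linear in its distribution argument and $\E\bbg_n=G$.

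What remains is the single inequality $\sure(\del_{\hat G},\bbf_n)\le\sure(\del_G,\bbf_n)+\OP({\sqrt{\log n/n}})$; chaining it with the two ingredients yields $L_n(\del_{\hat G})\le R(G,\del_G)+\OP({\sqrt{\log n/n}})$, and taking expectations over $\th_{[n]}$ and invoking the lower bound finishes the proof. This is precisely the step at which the EB argument used the \iid{} structure, and I expect it to be the main obstacle. The NPMLE self-bound $\sure(\del_{\hat G},F_{\hat G})\le\sure(\del_G,F_{\hat G})$, i.e.\ Bayes optimality of $\del_{\hat G}$ under $\hat G$, comes for free; the difficulty is transferring it from $F_{\hat G}$ to $\bbf_n$, which in the EB setting went through the efficiency identity \eqref{NPMLEmean}. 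I would first extract what the \emph{exact} stationarity of the NPMLE already gives, namely $\int h\,d\bbf_n=\int h\,dF_{\hat G}$ with no distributional assumption whenever $h(y)=\E_{\hat G}(\eta(\th)\mid Y=y)$ for bounded $\eta$, which disposes of the posterior-mean part of the SURE integrand; for the remaining nonlinear part I would split $\int h\,d(\bbf_n-F_{\hat G})$ into the independent-summand fluctuation $\int h\,d(\bbf_n-F_{\bbg_n})=\OP({\sqrt{\log n/n}})$ and the marginal-fit bias $\int h\,d(F_{\bbg_n}-F_{\hat G})$. Controlling the latter demands a Hellinger-type convergence rate for the fitted marginal $F_{\hat G}$ under independent but non-identically distributed observations; obtaining that rate without the \iid{} assumption of \cite{JiangZhang09,SahaGunt20} is, to my mind, the crux of the whole matter, whereas every other step descends directly from the analysis behind Theorem \ref{EBSURE}.
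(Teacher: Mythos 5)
Your lower bound is exactly the paper's: Nature may play $\th_{[n]}$ \iid\ from $G$, this strategy lies in $\scg$, and the Bayes rule for that strategy is the separable $\del_G$, so $\min_{\Del}\max_{\ti\bbg_n\in\scg}R(\ti\bbg_n,\Del)\ge R(G,\del_G)$. Where you part company with the paper is the upper bound, and there your proposal has a genuine, self-acknowledged gap. You try to establish the bound at the level of the loss conditional on a fixed realization $\th_{[n]}$, i.e.\ $L_n(\del_{\hat G})\le R(G,\del_G)+\OP(\sqrt{\log n/n})$, and this forces you to control $\sure(\del_{\hat G},\bbf_n)-\sure(\del_G,\bbf_n)$ for the NPMLE fitted to independent but non-identically distributed observations. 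You correctly identify that this requires a Hellinger-type rate for $F_{\hat G}$ outside the \iid\ setting and you do not supply it. That missing step is not a technicality: it is essentially the content of the compound-decision analyses of \cite{JiangZhang09,SahaGunt20} that the paper explicitly describes as tedious and is constructed to avoid. As written, the proposal does not prove the theorem.

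The paper's own route makes the conditional bound unnecessary. The payoff $R(\bbg_n,\Del)$ is defined with the expectation taken over Nature's (possibly random) $\th_{[n]}$, and the paper's argument is that for any fixed procedure the risk is linear in Nature's strategy; since every $\bbg_n\in\scg$ satisfies $\E\bbg_n=G$, the risk of the statistician's procedure against any $\bbg_n\in\scg$ coincides with its risk in the EB problem with prior $G$, so Theorem \ref{EBSURE} transfers verbatim and $(\bbg_n^{\mathrm{iid}},\del_{\hat G})$ is an asymptotic saddle point. No new concentration argument, and in particular no non-\iid\ NPMLE theory, is invoked. If you want to salvage your approach, you should either (a) retreat to the paper's averaged formulation and use linearity of $\bbg_n\mapsto R(\bbg_n,\Del_{\hat G})$ over $\scg$ in place of your second and third ingredients, or (b) recognize that the pointwise-in-$\th_{[n]}$ statement you are after is strictly stronger than the theorem and cannot be had by the soft arguments behind Theorem \ref{EBSURE} alone.
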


Note that if Nature ``chooses'' some fixed $\th_{[n]}$, then we can simply take $\scg$ to be a point mass at $\bbg_n$.

We considered a specific restriction on Nature, namely $\bbg_n\in\scg$. We could replace it with other restrictions. For example, we could consider $\{\bbg_n\}\in\ti\scg$ if $\bbg_n\weakly G$, since then $R(\bbg_n,\Del)\to R(G,\Del)$ by the definition of weak convergence.

To summarize, it was known that the statistician can do as well as the Disabled Oracle. But since this oracle is disabled, he cannot define the bound---he is not an oracle in the true sense of the word (the Permutation Oracle is a true oracle, but he is better than can be mimicked).  We established a notion under which  procedures based on the Tweedie formula are minimax.

\appendix
\section{THe NPMLE is finitely supported}
\label{app:MLE}

The NPMLE is defined as 
 \eqsplit{
   \hat G= \argmax_G \summ i1n \log\int e^{-(y_i-\ti\th)^2/2\sig^2}dG(\th)
  }

\begin{proposition}
\label{prop:finiteSupport}
  $\hat G$ has at most $n$ support points.
\end{proposition}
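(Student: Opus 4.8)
The plan is to recast the likelihood maximization as the maximization of a strictly concave function over the convex hull of a curve in $\R^n$, and then extract the support bound from Carath\'eodory's theorem together with a boundary argument. First I would introduce the moment map: for $\theta\in[-c,c]$ set $v(\theta)=\bigl(e^{-(y_1-\theta)^2/2\sig^2},\dots,e^{-(y_n-\theta)^2/2\sig^2}\bigr)\in\R^n$, so that the vector of marginal likelihoods under $G$ is $f(G)=\int v(\theta)\,dG(\theta)$. As $G$ ranges over all probability measures on $[-c,c]$, the point $f(G)$ ranges exactly over $C=\mathrm{conv}\,\Gamma$, the convex hull of the compact connected curve $\Gamma=v([-c,c])$; since $\Gamma$ is compact, $C$ is compact, and since every coordinate of $v$ is strictly positive, $C\subset(0,\infty)^n$. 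The log-likelihood equals $\Psi(f(G))$ with $\Psi(f)=\sum_{i=1}^{n}\log f_i$, which is continuous and strictly concave on $(0,\infty)^n$ (its Hessian is the negative-definite diagonal matrix with entries $-1/f_i^2$). Hence $\Psi$ attains its maximum over the compact set $C$ at a \emph{unique} vector $f^\star$, and any NPMLE $\hat G$ is precisely a probability measure with $f(\hat G)=f^\star$.

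The key observation is that $f^\star$ lies on the (relative) boundary of $C$. Indeed $\nabla\Psi(f^\star)=(1/f^\star_1,\dots,1/f^\star_n)$ is strictly positive, hence nonzero, so $f^\star$ cannot be an interior maximizer of $\Psi$; there is a supporting hyperplane $H=\{f:\langle a,f\rangle=b\}$ with $\langle a,f\rangle\le b$ on $C$ and $f^\star\in H$. By Carath\'eodory I may write $f^\star=\sum_k w_k\,v(\theta_k)$ as a finite convex combination of points of $\Gamma$; evaluating $\langle a,\cdot\rangle$ and using $\langle a,f^\star\rangle=b$ together with $\langle a,v(\theta_k)\rangle\le b$ forces $\langle a,v(\theta_k)\rangle=b$ for every $k$ in the support. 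Thus $f^\star$ already lies in the convex hull of $\Gamma\cap H$, a subset of the $(n-1)$-dimensional affine space $H$. Applying Carath\'eodory once more, now inside $H$, expresses $f^\star$ as a convex combination of at most $n$ of the points $v(\theta_j)$. The corresponding discrete measure $\hat G=\sum_{j=1}^{n}w_j\delta_{\theta_j}$ then satisfies $f(\hat G)=f^\star$ and therefore maximizes the likelihood, so there is an NPMLE with at most $n$ support points.

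The step I expect to be the main obstacle is the passage from the $n+1$ points produced by a naive application of Carath\'eodory to the sharp bound $n$. This is exactly where the boundary/supporting-hyperplane argument is needed: it confines $f^\star$ to an affine set of dimension at most $n-1$, and saving one dimension is what yields $n$ rather than $n+1$. I would also record the degenerate case separately: if $C$ fails to be full-dimensional, one simply takes $H$ to be the affine hull of $C$ (of dimension $\le n-1$) and applies Carath\'eodory there, so the bound $n$ holds unconditionally. The remaining points are routine but worth checking: that $C=\mathrm{conv}\,\Gamma$ is compact in finite dimensions, that $\int v\,dG$ lies in $\mathrm{conv}\,\Gamma$ for every (not necessarily discrete) $G$, and that the extreme points of $C$ all lie on the generating curve $\Gamma$, so that the reduction only ever introduces genuine point masses $\delta_{\theta_j}$ rather than spurious support.
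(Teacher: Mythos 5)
Your argument is correct, but it takes a genuinely different route from the paper's. You follow the classical convex-geometry approach (Lindsay's geometry of mixture likelihoods): the likelihood depends on $G$ only through the vector $\bigl(f_G(y_1),\dots,f_G(y_n)\bigr)$, which ranges over the convex hull $C$ of the compact curve $\theta\mapsto v(\theta)$; strict concavity of $\sum_i\log f_i$ on $(0,\infty)^n$ gives a unique optimal vector $f^\star$, the nonvanishing gradient forces $f^\star$ onto the relative boundary of $C$, and Carath\'eodory applied inside a supporting hyperplane yields a representation by at most $n$ (rather than $n+1$) points of the curve. The paper instead perturbs the support of a maximizer directly, derives the first-order condition $h(\th)=\sum_i (y_i-\th)e^{y_i\th}/f_{\hat G}(y_i)=0$ at every support point, and bounds the number of admissible support points via the Descartes--Rolle count of zeros of exponential polynomials (Lemma \ref{lem:polynomial}). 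Your route buys several things: it needs no zero-counting lemma, it works verbatim for any mixture with a continuous strictly positive kernel, and it delivers uniqueness of the fitted vector $f^\star$ as a byproduct. The paper's route, in exchange, produces an explicit equation locating the support points.

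One caveat, which you in effect concede in your last sentence: your argument proves that \emph{there exists} a maximizer supported on at most $n$ points, not that \emph{every} maximizer is. Two distinct mixing measures can agree on the $n$ values $f_G(y_1),\dots,f_G(y_n)$, so uniqueness of $f^\star$ does not by itself force every NPMLE to be discrete, whereas the paper's variational argument applies to an arbitrary maximizer. For the use made of the proposition in the paper --- reducing the optimization to the finite-dimensional problem \eqref{mledisc} --- the existence version suffices. If you want the stronger statement, note that any maximizing $G$ must put all its mass on the set where the directional derivative $D(\theta)=\sum_i v_i(\theta)/f^\star_i-n$ attains its maximum value $0$; the support points are then critical points of $D$, and counting them returns you to an exponential-polynomial zero count, i.e., essentially to the paper's lemma.
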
 
  
Consider any  parametric $\gamma_t:[-c,c]\to[-c,c]$ with $\gamma_0(\th)\equiv \th$. Then  the derivative of the log-likelihood of $\hat G_t=\hat G\circ \gamma_t$ with respect to $t$ should be 0 at 0. Consider $\gamma^\eps_t(\th')=\th'+t(\th+\eps-\th')\ind(\th<\th'<\th+\eps)$ for any $\th$ is the support of $\hat G$. Taking the limit as $\eps\to 0$,  we obtain that if  $\th$ is in the support of the NPMLE, then 
 \eqsplit{
    h(\th)&\equiv \summ i1n \frac{y_i-\th}{f_{\hat G}(y_i)} e^{y_i\th} = 0.
  }
Consider $h(\cdot)$ as a function of $\th$ (with $\hat G$ and $y_{[n]}$ fixed) it is of the form $\sum p_i(\th)e^{y_i\th}$, and thus it has at most $2n-1$ zeros by Lemma \ref{lem:polynomial} below, and since between any two maxima there is a minimum, the support of $\hat G$ has at most $n$ points which corresponds to the maximizers.
\begin{lemma}\label{lem:polynomial}
  The function $h(\th) = \summ i1n p_i(\th)e^{a_i\th}$, where $p_i$ is a polynomial of degree $r_i$ admits at most $\summ i1n (r_i+1)-1$ zeros. 
\end{lemma}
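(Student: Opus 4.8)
The plan is to prove this by induction on $n$, the number of exponential terms, combining Rolle's theorem with repeated differentiation. First I would reduce to the case where the exponents $a_1,\dots,a_n$ are pairwise distinct: if $a_i=a_j$ for some $i\ne j$, the two terms merge into $(p_i+p_j)e^{a_i\th}$, a polynomial of degree at most $\max(r_i,r_j)$ times $e^{a_i\th}$, so the contribution to the bound changes from $(r_i+1)+(r_j+1)$ to at most $\max(r_i,r_j)+1$, which is smaller. Hence it suffices to prove the estimate assuming distinct exponents (which is anyway the relevant situation in Proposition~\ref{prop:finiteSupport}, where $a_i=y_i$).

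For the base case $n=1$ we have $h(\th)=p_1(\th)e^{a_1\th}$, and since $e^{a_1\th}$ never vanishes the zeros of $h$ are exactly those of the degree-$r_1$ polynomial $p_1$, of which there are at most $r_1=(r_1+1)-1$. For the inductive step I set $g(\th)=e^{-a_n\th}h(\th)$; multiplication by the nowhere-zero factor $e^{-a_n\th}$ preserves the zero set, so $g$ and $h$ have the same number of zeros, say $Z$. Writing $g(\th)=p_n(\th)+\sum_{i=1}^{n-1}p_i(\th)e^{(a_i-a_n)\th}$, I would differentiate $r_n+1$ times: the polynomial $p_n$ of degree $r_n$ is annihilated, while each surviving term $p_i(\th)e^{b_i\th}$ with $b_i=a_i-a_n\ne 0$ is sent to some $q_i(\th)e^{b_i\th}$ with $\deg q_i=r_i$. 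Thus $g^{(r_n+1)}$ is again of the form covered by the lemma, now with the $n-1$ distinct exponents $b_1,\dots,b_{n-1}$ and degrees $r_1,\dots,r_{n-1}$, so by the induction hypothesis it has at most $\sum_{i=1}^{n-1}(r_i+1)-1$ zeros. On the other hand, Rolle's theorem shows each differentiation drops the number of distinct zeros by at most one, so $g^{(r_n+1)}$ has at least $Z-(r_n+1)$ zeros. Combining, $Z-(r_n+1)\le\sum_{i=1}^{n-1}(r_i+1)-1$, i.e. $Z\le\sum_{i=1}^{n}(r_i+1)-1$, as claimed.

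The step I expect to need the most care is the degree preservation under differentiation, since it is exactly what forces the distinctness reduction: one derivative sends $p_i(\th)e^{b_i\th}$ to $(p_i'+b_ip_i)(\th)e^{b_i\th}$, whose leading coefficient is $b_i$ times the leading coefficient of $p_i$, and this is nonzero precisely because $b_i=a_i-a_n\ne 0$. If the exponents were allowed to coincide, a term could lose degree under differentiation and the induction would not close. The other point to watch is the direction in which Rolle is applied: it is used to \emph{lower}-bound the zeros of the derivative (between consecutive zeros of $g^{(k-1)}$ there lies a zero of $g^{(k)}$), which is then played off against the induction hypothesis that \emph{upper}-bounds the zeros of the top derivative $g^{(r_n+1)}$.

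Finally I would note that the argument counts distinct zeros, which is exactly the quantity needed to bound the support points in Proposition~\ref{prop:finiteSupport}; should one prefer to count zeros with multiplicity, the same scheme goes through verbatim using the generalized Rolle theorem for smooth functions.
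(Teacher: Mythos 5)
Your proof is correct and follows essentially the same route as the paper's: induction on the number of exponential terms, multiplying by $e^{-a_n\th}$ to make one exponent zero, differentiating enough times to annihilate the resulting polynomial term, and playing Rolle's lower bound on the zeros of derivatives against the induction hypothesis. Your explicit treatment of the reduction to distinct exponents and of degree preservation under differentiation (the leading coefficient picking up the nonzero factor $b_i=a_i-a_n$) is a welcome precision that the paper's terser sketch leaves implicit.
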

This result is quoted in our context in   \cite{Jewell82}, where it is referred  to \cite{PolaySzego1925}. I couldn't find it in the book (but the book has more than 400 pages), so, for completeness,  here is the very short argument using Descartes' and Rolle's rules in the way it would be proved in the book. See also \cite{Jameson2006}.  The proof is by induction on $n$. The case of $n=1$ is trivial. Assume it holds for $n=1,\dots,m$. Let $n=m+1$. By multiplying by $e^{-a_{m+1}\th}\ne 0$, it is clear that we can assume, wlog, that $a_{m+1}=0$. Thus,  $h(\th) = \summ i1n p_i(\th)e^{a_i\th}+ p_{m+1}(\th)$. By the induction assumption, its $r_{m+1}$ derivative has at most $\summ i1m (r_m+1)-1$ zeros. By Rolle's theorem (applied $r_{m+1}$ times) the proof is complete.
  
It can simplify the algorithm, in particular, the EM is considerably simplified, if we consider a fixed support on a $m$ support grid with spacing of $2c/m$. Let $\scg^m$ be the set of all pdf supported on this grid. Let $\hat\th_{[n]}$ and $\hat g_{[n]}$ be the NPMLE, $\ti\th_{[n]}$ be the sequence $\hat\th_{[n]}$ rounded to the grid, $\ti\th_j=m^{-1}\lfloor m\hat\th_j+0.5\rfloor$. Then the difference between the log-likelihood of the pair $(\hat \th_{[n]},\hat g_{[n]})$ and that of the pair $(\ti \th_{[n]},\ti g_{[n]})$ is only  
 \eqsplit{
    \ell(\hat \th_{[n]},\hat g_{[n]};Y_{[n]}) &-\ell(\ti \th_{[n]},\ti g_{[n]};Y_{[n]})
    \\ 
    &\le \ell(\hat \th_{[n]},\hat g_{[n]};Y_{[n]}) -\ell( \ti\th_{[n]},\hat g_{[n]};Y_{[n]})
    \\ 
    &\le n c^2\bigl(\sum_{j}|\hat \th_{j}-\ti \th_{j}|\bigr)^2 =\OP({ n^3/m^2}),
  }
since, the first derivative of the log-likelihood is 0 at the NPMLE, and the second derivative is the conditional covariance of $(\th_j,\th_k)$ given the observations (at the parameters).   Thus, if $m\gg n^{3/2}$ the difference is insignificant.

\bibliographystyle{imsart-nameyear} 
\bibliography{CDminimax.bib}

\begin{thebibliography}{26}

\bibitem[\protect\citeauthoryear{Bickel}{1981}]{Bickel81}
\begin{barticle}[author]
\bauthor{\bsnm{Bickel},~\bfnm{Peter~J}\binits{P.~J.}}
(\byear{1981}).
\btitle{Minimax estimation of the mean of a normal distribution when the
  parameter space is restricted}.
\bjournal{The Annals of Statistics}
\bvolume{9}
\bpages{1301--1309}.
\end{barticle}
\endbibitem

\bibitem[\protect\citeauthoryear{Bickel et~al.}{1993}]{bkrw}
\begin{bbook}[author]
\bauthor{\bsnm{Bickel},~\bfnm{Peter~J}\binits{P.~J.}},
  \bauthor{\bsnm{Klaassen},~\bfnm{Chris~AJ}\binits{C.~A.}},
  \bauthor{\bsnm{Bickel},~\bfnm{Peter~J}\binits{P.~J.}},
  \bauthor{\bsnm{Ritov},~\bfnm{Ya’acov}\binits{Y.}},
  \bauthor{\bsnm{Klaassen},~\bfnm{J}\binits{J.}},
  \bauthor{\bsnm{Wellner},~\bfnm{Jon~A}\binits{J.~A.}} \AND
  \bauthor{\bsnm{Ritov},~\bfnm{YA'Acov}\binits{Y.}}
(\byear{1993}).
\btitle{Efficient and adaptive estimation for semiparametric models}
\bvolume{4}.
\bpublisher{Springer}.
\end{bbook}
\endbibitem

\bibitem[\protect\citeauthoryear{Brown and
  Greenshtein}{2009}]{BrownGreenshtein09}
\begin{barticle}[author]
\bauthor{\bsnm{Brown},~\bfnm{Lawrence~D.}\binits{L.~D.}} \AND
  \bauthor{\bsnm{Greenshtein},~\bfnm{Eitan}\binits{E.}}
(\byear{2009}).
\btitle{Nonparametric Empirical Bayes and Compound Decision Approaches to
  Estimation of a High-Dimensional Vector of Normal Means}.
\bjournal{The Annals of Statistics}
\bvolume{37}
\bpages{1685--1704}.
\end{barticle}
\endbibitem

\bibitem[\protect\citeauthoryear{Carroll and Hall}{1988}]{carrollHall88}
\begin{barticle}[author]
\bauthor{\bsnm{Carroll},~\bfnm{Raymond~J.}\binits{R.~J.}} \AND
  \bauthor{\bsnm{Hall},~\bfnm{Peter}\binits{P.}}
(\byear{1988}).
\btitle{Optimal Rates of Convergence for Deconvolving a Density}.
\bjournal{Journal of the American Statistical Association}
\bvolume{83}
\bpages{1184--1186}.
\bdoi{10.1080/01621459.1988.10478718}
\end{barticle}
\endbibitem

\bibitem[\protect\citeauthoryear{Donoho and Low}{1992}]{donohoLaw92}
\begin{barticle}[author]
\bauthor{\bsnm{Donoho},~\bfnm{David~L.}\binits{D.~L.}} \AND
  \bauthor{\bsnm{Low},~\bfnm{Mark~G.}\binits{M.~G.}}
(\byear{1992}).
\btitle{Renormalization Exponents and Optimal Pointwise Rates of Convergence}.
\bjournal{The Annals of Statistics}
\bvolume{20}
\bpages{944--970}.
\end{barticle}
\endbibitem

\bibitem[\protect\citeauthoryear{Dubkov and Malakhov}{1976}]{DubkovMalakhov76}
\begin{barticle}[author]
\bauthor{\bsnm{Dubkov},~\bfnm{AA}\binits{A.}} \AND
  \bauthor{\bsnm{Malakhov},~\bfnm{AN}\binits{A.}}
(\byear{1976}).
\btitle{Properties and interdependence of the cumulants of a random variable}.
\bjournal{Radiophysics and Quantum Electronics}
\bvolume{19}
\bpages{833--839}.
\end{barticle}
\endbibitem

\bibitem[\protect\citeauthoryear{Efron}{2011}]{Efron11}
\begin{barticle}[author]
\bauthor{\bsnm{Efron},~\bfnm{Bradley}\binits{B.}}
(\byear{2011}).
\btitle{Tweedie’s Formula and Selection Bias}.
\bjournal{Journal of the American Statistical Association}
\bvolume{106}
\bpages{1602-1614}.
\bnote{PMID: 22505788}.
\bdoi{10.1198/jasa.2011.tm11181}
\end{barticle}
\endbibitem

\bibitem[\protect\citeauthoryear{Efron}{2019}]{Efron19}
\begin{barticle}[author]
\bauthor{\bsnm{Efron},~\bfnm{Bradley}\binits{B.}}
(\byear{2019}).
\btitle{Bayes, Oracle Bayes and Empirical Bayes}.
\bjournal{Statistical Science}
\bvolume{34}
\bpages{pp. 177--201}.
\end{barticle}
\endbibitem

\bibitem[\protect\citeauthoryear{Fan}{1991}]{fan1991}
\begin{barticle}[author]
\bauthor{\bsnm{Fan},~\bfnm{Jianqing}\binits{J.}}
(\byear{1991}).
\btitle{On the Optimal Rates of Convergence for Nonparametric Deconvolution
  Problems}.
\bjournal{The Annals of Statistics}
\bvolume{19}
\bpages{1257--1272}.
\end{barticle}
\endbibitem

\bibitem[\protect\citeauthoryear{Greenshtein and
  Ritov}{2009}]{GreenshteinRitov09}
\begin{barticle}[author]
\bauthor{\bsnm{Greenshtein},~\bfnm{Eitan}\binits{E.}} \AND
  \bauthor{\bsnm{Ritov},~\bfnm{Ya'acov}\binits{Y.}}
(\byear{2009}).
\btitle{Asymptotic efficiency of simple decisions for the compound decision
  problem}.
\bjournal{Lecture Notes-Monograph Series}
\bpages{266--275}.
\end{barticle}
\endbibitem

\bibitem[\protect\citeauthoryear{Greenshtein and
  Ritov}{2019}]{GreenshteinRitov19}
\begin{barticle}[author]
\bauthor{\bsnm{Greenshtein},~\bfnm{Eitan}\binits{E.}} \AND
  \bauthor{\bsnm{Ritov},~\bfnm{Ya’acov}\binits{Y.}}
(\byear{2019}).
\btitle{Comment: Empirical Bayes, compound decisions and exchangeability}.
\end{barticle}
\endbibitem

\bibitem[\protect\citeauthoryear{Greenshtein and
  Ritov}{2022}]{GreenshteinRitov22}
\begin{barticle}[author]
\bauthor{\bsnm{Greenshtein},~\bfnm{Eitan}\binits{E.}} \AND
  \bauthor{\bsnm{Ritov},~\bfnm{Ya’acov}\binits{Y.}}
(\byear{2022}).
\btitle{Generalized maximum likelihood estimation of the mean of parameters of
  mixtures. With applications to sampling and to observational studies}.
\bjournal{Electronic Journal of Statistics}
\bvolume{16}
\bpages{5934--5954}.
\end{barticle}
\endbibitem

\bibitem[\protect\citeauthoryear{Jameson}{2006}]{Jameson2006}
\begin{barticle}[author]
\bauthor{\bsnm{Jameson},~\bfnm{G.~J.~O.}\binits{G.~J.~O.}}
(\byear{2006}).
\btitle{Counting zeros of generalised polynomials: Descartes’ rule of signs
  and Laguerre’s extensions}.
\bjournal{The Mathematical Gazette}
\bvolume{90}
\bpages{223–234}.
\bdoi{10.1017/S0025557200179628}
\end{barticle}
\endbibitem

\bibitem[\protect\citeauthoryear{Jewell}{1982}]{Jewell82}
\begin{barticle}[author]
\bauthor{\bsnm{Jewell},~\bfnm{Nicholas~P.}\binits{N.~P.}}
(\byear{1982}).
\btitle{Mixtures of Exponential Distributions}.
\bjournal{The Annals of Statistics}
\bvolume{10}
\bpages{479--484}.
\end{barticle}
\endbibitem

\bibitem[\protect\citeauthoryear{Jiang and Zhang}{2009}]{JiangZhang09}
\begin{barticle}[author]
\bauthor{\bsnm{Jiang},~\bfnm{Wenhua}\binits{W.}} \AND
  \bauthor{\bsnm{Zhang},~\bfnm{Cun-Hui}\binits{C.-H.}}
(\byear{2009}).
\btitle{General maximum likelihood empirical Bayes estimation of normal means}.
\bjournal{Ann. Statist.}
\bvolume{37}
\bpages{1647--1684}.
\end{barticle}
\endbibitem

\bibitem[\protect\citeauthoryear{Kiefer and
  Wolfowitz}{1956}]{kieferWolfowitz1956}
\begin{barticle}[author]
\bauthor{\bsnm{Kiefer},~\bfnm{J.}\binits{J.}} \AND
  \bauthor{\bsnm{Wolfowitz},~\bfnm{J.}\binits{J.}}
(\byear{1956}).
\btitle{Consistency of the Maximum Likelihood Estimator in the Presence of
  Infinitely Many Incidental Parameters}.
\bjournal{The Annals of Mathematical Statistics}
\bvolume{27}
\bpages{887--906}.
\end{barticle}
\endbibitem

\bibitem[\protect\citeauthoryear{Koenker and Mizera}{2014}]{KoenkerMizera14}
\begin{barticle}[author]
\bauthor{\bsnm{Koenker},~\bfnm{Roger}\binits{R.}} \AND
  \bauthor{\bsnm{Mizera},~\bfnm{Ivan}\binits{I.}}
(\byear{2014}).
\btitle{Convex Optimization, Shape Constraints, Compound Decisions, and
  Empirical Bayes Rules}.
\bjournal{Journal of the American Statistical Association}
\bvolume{109}
\bpages{674--685}.
\bdoi{10.1080/01621459.2013.869224}
\end{barticle}
\endbibitem

\bibitem[\protect\citeauthoryear{Pollard}{1984}]{pollard1984}
\begin{bbook}[author]
\bauthor{\bsnm{Pollard},~\bfnm{D.}\binits{D.}}
(\byear{1984}).
\btitle{Convergence of Stochastic Processes}.
\bseries{Clinical Perspectives in Obstetrics and Gynecology}.
\bpublisher{Springer New York}.
\end{bbook}
\endbibitem

\bibitem[\protect\citeauthoryear{Polya and Szego}{1925}]{PolaySzego1925}
\begin{bbook}[author]
\bauthor{\bsnm{Polya},~\bfnm{G.}\binits{G.}} \AND
  \bauthor{\bsnm{Szego},~\bfnm{G.}\binits{G.}}
(\byear{1925}).
\btitle{Aufgaben und Lehrsatze aus der Analysis, 2}.
\bpublisher{Springer, Berlin}.
\end{bbook}
\endbibitem

\bibitem[\protect\citeauthoryear{Rice and Rosenblatt}{1983}]{riceRosenblatt83}
\begin{barticle}[author]
\bauthor{\bsnm{Rice},~\bfnm{John}\binits{J.}} \AND
  \bauthor{\bsnm{Rosenblatt},~\bfnm{Murray}\binits{M.}}
(\byear{1983}).
\btitle{Smoothing Splines: Regression, Derivatives and Deconvolution}.
\bjournal{The Annals of Statistics}
\bvolume{11}
\bpages{141--156}.
\end{barticle}
\endbibitem

\bibitem[\protect\citeauthoryear{Ritov}{1983}]{ritov1983}
\begin{bunpublished}[author]
\bauthor{\bsnm{Ritov},~\bfnm{Ya'acov}\binits{Y.}}
(\byear{1983}).
\btitle{Rates of convergence for Gaussian deconvolution.}
\end{bunpublished}
\endbibitem

\bibitem[\protect\citeauthoryear{Robbins}{1956}]{Robbins56}
\begin{bmisc}[author]
\bauthor{\bsnm{Robbins},~\bfnm{H}\binits{H.}}
(\byear{1956}).
\btitle{An Empirical Bayes approach to statistics. In proceedings of the third
  Berkeley symposium of mathematical statistics and probability}.
\end{bmisc}
\endbibitem

\bibitem[\protect\citeauthoryear{Saha and Guntuboyin}{2020}]{SahaGunt20}
\begin{barticle}[author]
\bauthor{\bsnm{Saha},~\bfnm{Sujayam}\binits{S.}} \AND
  \bauthor{\bsnm{Guntuboyin},~\bfnm{Adityanand}\binits{A.}}
(\byear{2020}).
\btitle{On the nonparametric maximum likelihood estimator for gaussian location
  mixture densities with application to Gaussian denoising}.
\bjournal{Annals of Statistics}
\bvolume{48}
\bpages{738--762}.
\end{barticle}
\endbibitem

\bibitem[\protect\citeauthoryear{Stefanski and
  Carroll}{1990}]{stefanskiCaroll90}
\begin{barticle}[author]
\bauthor{\bsnm{Stefanski},~\bfnm{Leonard~A.}\binits{L.~A.}} \AND
  \bauthor{\bsnm{Carroll},~\bfnm{Raymond~J.}\binits{R.~J.}}
(\byear{1990}).
\btitle{Deconvolving kernel density estimators}.
\bjournal{Statistics}
\bvolume{21}
\bpages{169--184}.
\bdoi{10.1080/02331889008802238}
\end{barticle}
\endbibitem

\bibitem[\protect\citeauthoryear{Stute}{1983}]{Stute83}
\begin{barticle}[author]
\bauthor{\bsnm{Stute},~\bfnm{Winfried}\binits{W.}}
(\byear{1983}).
\btitle{Empirical processes indexed by smooth functions}.
\bjournal{Stochastic Processes and their Applications}
\bvolume{14}
\bpages{55--66}.
\end{barticle}
\endbibitem

\bibitem[\protect\citeauthoryear{Zhang}{2003}]{Zhang03}
\begin{barticle}[author]
\bauthor{\bsnm{Zhang},~\bfnm{Cun-Hui}\binits{C.-H.}}
(\byear{2003}).
\btitle{Compound Decision Theory and Empirical Bayes Methods}.
\bjournal{The Annals of Statistics}
\bvolume{31}
\bpages{379--390}.
\end{barticle}
\endbibitem

\end{thebibliography}

\end{document}